\newtheorem{theorem}{Theorem}[section]
\newtheorem{corollary}{Corollary}[section]
\newtheorem{lemma}{Lemma}[section]
\newcommand\keywords[1]{\textbf{Keywords}:#1}
\title{Liouville theorem for one kind of elliptic equations on complete Riemannian manifold}
\author{ Wangzhe Wu\\Institute of Mathematics \\Academy of Mathematics and Systems Science\\ Chinese
Academy of Sciences, Beijing, 100190, China\\Email: wuwz18@mail.ustc.edu.cn}
\begin{document}
\begin{CJK}{UTF8}{gbsn}

	\pagestyle{fancy}

\fancyhead{}

\fancyhead[CE]{\leftmark}

	\maketitle

	\begin{abstract}
		We use maximum principle to  prove the Liouville theorem of the equation $\Delta U + b\cdot \nabla U + h U^{\alpha} = 0, U \geq 0, 0 < \alpha < \frac{n + 2}{n - 2}$ on the complete Riemannian manifold with non-negative Ricci tensor, which improve the result of Gidas-Spruck and Catino-Monticelli.
	\end{abstract}
	
	\keywords{ Liouville theorem, maximum principle, complete Riemannian manifold}
	
	\section{Introduction}
		A. Huber in \cite{Huber} and Cheng-Yau in \cite{Yau} proved the non-existence of non-constant positive harmonic function on the complete Riemannian manifold with non-negative Ricci tensor.
		B. Gidas and J. Spruck \cite{Gidas} used integration by parts to prove the Liouville theorem of the equation
		\begin{equation}\label{equ111}
			\Delta u + u^\alpha = 0,1 \leq \alpha < \frac{n + 2}{n - 2},  u \geq 0 \text{ in } M,
		\end{equation}
		where $M$ is a complete Riemannian manifold with non-negative Ricci tensor and $n > 2$. Later W. X. Chen and C. M. Li in \cite{Li} gave a new proof in $\mathbb R^n$ by using moving planes method. Besides B. Gidas and J. Spruck studied the properties of positive solutions of the equation
	\begin{equation}\label{equ1}
		\Delta U + b\cdot \nabla U + hU^{\alpha} = 0, 1 \leq \alpha < \frac{n + 2}{n - 2}, U\geq 0 \text{ in } M.
	\end{equation}	
	And they got the Liouville theorem under some assumptions. J. Y. Li in \cite{LJY}(his Theorem 2.2) used maximum principle to improve the result of B. Gidas and J. Spruck: he relaxed their conditions on the growth of $|\nabla \log (h)|, |b|$ and on the range of $\alpha$. But he only solved the case that
	$0 < \alpha < \frac{n + 4}{n}$ when $n = 2, 3$ and $0 < \alpha < \frac{n }{n - 2}$ when $n \geq 4$. Recently when $b \equiv 0$, Catino and Monticelli  in \cite{JEMS} got the Liouville theorem for $1 < \alpha < \frac{n + 2}{n - 2}$ where an extra condition 
	\begin{equation}\label{1204extra}
		h(x) \geq Cr(x)^{-\sigma}
	\end{equation}
	 is needed. In this paper, we use maximum principle to obtain the Liouville theorem for $0 < \alpha < \frac{n + 2}{n - 2}$ and get rid of the extra condition \eqref{1204extra}. Our idea originally comes from the gradient estimate of Cheng and Yau \cite{Yau}, where they used the auxiliary function 
	\begin{equation}
		Z = u^{-2}|\nabla u|^2.
	\end{equation}
	The group of Youde Wang \cite{WangYoude1, WangYoude2, WangYoude3, WangYoude4} used the auxiliary function and Moser iteration to get gradient estimate and Harnack inequality for semilinear and quasilinear equations.
	Besides the auxiliary function in \cite{LJY} and \cite{Brendle}  is developed to 
	\begin{equation}\label{1204Z1}
		Z = u^{\gamma -2}|\nabla u|^2 + \beta u^{\gamma -1}\Delta u,
	\end{equation}
	which will also be used in our proof.
	 And Zhihao Lu \cite{Lu} developed the method of J.Y. Li \cite{LJY} and choosed the auxiliary function
	\begin{equation}\label{1204Z2}
		Z = (u + \varepsilon)^{\gamma} (u^{ -2}|\nabla u|^2 + \beta u^{ -1}\Delta u),
	\end{equation}
	where he used maximum principle to get gradient estimate and Harnack inequality for the equation $\Delta u = f(u, x)$ for some special $f$. However compared \eqref{1204Z2} with \eqref{1204Z1}, his method is too complicated and our proof in this paper is simpler than his.	 Here is our main result:
	\begin{theorem}\label{theorem2}
		Let $M$ be a n-dimensional complete Riemannian manifold with Ricci tensor $R_{i j}$. Suppose that $h \in C^2(M)$ , $b \in \mathscr{X}(M)$, $\alpha \in \mathbb R$ and $R_{i j}$ satisfy the following conditions:
		\begin{itemize}
			\item (1) $\forall x \in M$, $h(x) \geq 0$, $\Delta h(x) + \nabla h(x) \cdot b(x) \geq 0$.
			\item (2) $0 < \alpha < \frac{n + 2}{n - 2}$.
			\item (3) $|b| = o(\rho^{-1})$ as $\rho \rightarrow \infty$ and when $\rho > 1$,
			\begin{equation}
				R_{i j} \geq -k(\rho)^2 g_{i j}, 
			\end{equation} 
			where $\rho = \rho(x)$ is the geodesic distance from $x$ to a fixed point $x_0$ and $k(\rho) = o(\rho^{-1})$.
			\item (4) The tensor field $-b_{i j} + R_{i j} - \left( C_{n, \alpha} - \frac{1}{n}\right) |b|^2g_{i j}$ is positive definite, where the constant $C(n, \alpha) > 0$ depends only on $n, \alpha$.
		\end{itemize} 
		Then every non-negative solution of \eqref{equ1} is constant.
	\end{theorem}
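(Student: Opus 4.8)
The first step removes the trivial case. Since $h\ge 0$ and $U\ge 0$, the equation gives $\Delta U+b\cdot\nabla U=-hU^{\alpha}\le 0$, so $U$ is a nonnegative supersolution of the strictly elliptic operator $\Delta+b\cdot\nabla$; by the strong maximum principle either $U\equiv 0$, which is constant, or $U>0$ on all of $M$. Assume from now on $u:=U>0$. Following the auxiliary function \eqref{1204Z1} I work with the two--parameter family
\[
Z_{\gamma,\beta}:=u^{\gamma-2}|\nabla u|^{2}+\beta\,u^{\gamma-1}\Delta u=u^{\gamma-2}\bigl(|\nabla u|^{2}+\beta u\,\Delta u\bigr),
\]
together with the purely algebraic identity, for $a:=1+\beta^{-1}$,
\[
\Delta(u^{a})=a(a-1)\,u^{a-\gamma}Z_{\gamma,\beta},
\]
which shows that the sign of $Z_{\gamma,\beta}$ is that of $\tfrac1{a(a-1)}\Delta(u^{a})$ and is independent of $\gamma$. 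The plan is: (i) prove $Z_{\gamma,\beta}\le 0$ on $M$ for every $(\gamma,\beta)$ in a suitable nonempty set $\mathcal A=\mathcal A(n,\alpha)$ containing $\beta$--values of both signs; (ii) specialize to two such pairs to deduce $\nabla u\equiv 0$.

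For (i) I compute $\Delta Z_{\gamma,\beta}+\langle X,\nabla Z_{\gamma,\beta}\rangle$ for a drift field $X=b+c\,u^{-1}\nabla u$ with a constant $c=c(\gamma)$ to be chosen, using the Bochner--Weitzenböck formula $\tfrac12\Delta|\nabla u|^{2}=|\nabla^{2}u|^{2}+\langle\nabla u,\nabla\Delta u\rangle+R_{ij}\nabla^{i}u\nabla^{j}u$, the product rules for $\Delta(u^{s}f)$, and one differentiation of the equation, $\nabla_{k}\Delta u=-b_{ki}\nabla^{i}u-b^{i}\nabla_{k}\nabla_{i}u-u^{\alpha}\nabla_{k}h-\alpha h u^{\alpha-1}\nabla_{k}u$ (here $b_{ki}=\nabla_{k}b_{i}$). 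Collecting terms, absorbing the $b$--contributions by Young's inequality, and using the equation to eliminate $\Delta u$ where convenient, one reaches an inequality of the schematic shape
\[
\Delta Z_{\gamma,\beta}+\langle X,\nabla Z_{\gamma,\beta}\rangle\ \ge\ Q\ +\ \bigl[-b_{ij}+R_{ij}-(C_{n,\alpha}-\tfrac1n)|b|^{2}g_{ij}\bigr]\nabla^{i}u\nabla^{j}u\,u^{\gamma-2}\ +\ \bigl(\text{multiple of }\Delta h+\nabla h\cdot b\bigr)\ -\ \mathcal E,
\]
where $Q$ is a quadratic form in the entries of $\nabla^{2}u$, $|\nabla u|^{2}$ and $\Delta u$ (suitably weighted by powers of $u$) and $\mathcal E$ collects the remaining lower--order terms, each carrying a factor $|b|$, $k(\rho)$ or $\nabla h$ that is harmless after localization. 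Conditions (1) and (4) make the two middle terms nonnegative, and the decisive point is that the subcritical range $0<\alpha<\frac{n+2}{n-2}$ in (2) is exactly what permits a choice of $\gamma,\beta,c$ for which $Q\ge\delta\,u^{-\gamma}Z_{\gamma,\beta}^{2}$ for some $\delta=\delta(n,\alpha)>0$ wherever $Z_{\gamma,\beta}\ge 0$; morally this comes from the refined trace inequality $|\nabla^{2}u|^{2}\ge\frac1n(\Delta u)^{2}$ plus the surplus bought by the subcritical exponent, $C_{n,\alpha}$ being the leftover gradient coefficient that $R_{ij}-b_{ij}$ must absorb. With this inequality in hand I localize: fix $x_{0}$, let $\rho=\mathrm{dist}(x_{0},\cdot)$, take $\varphi=\eta(\rho/R)$ with $\eta\equiv1$ on $[0,1]$ and $\mathrm{supp}\,\eta\subset[0,2)$, and apply the maximum principle to $\varphi^{2}Z_{\gamma,\beta}$ on $B_{2R}$. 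If its maximum is nonpositive we are done on $B_{R}$; otherwise, at an interior maximum $p$ (regularizing $\rho$ by Calabi's trick if $p$ lies in the cut locus) one has $\nabla(\varphi^{2}Z)(p)=0$, $\Delta(\varphi^{2}Z)(p)\le 0$ and $Z(p)>0$; substituting $\nabla Z=-2\varphi^{-1}Z\nabla\varphi$ and invoking the Laplacian comparison $\Delta\rho\le(n-1)(\rho^{-1}+k(\rho))$ together with $|b|,k(\rho)=o(\rho^{-1})$ from (3), every term of $\mathcal E$ and every cutoff term is $o(R^{-2})(1+Z(p))$, so the $\delta\,u^{-\gamma}Z^{2}$ term forces $Z(p)\le C\,\omega(R)$ with $\omega(R)\to 0$. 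Hence $\sup_{B_{R}}Z_{\gamma,\beta}\to 0$ as $R\to\infty$, i.e.\ $Z_{\gamma,\beta}\le 0$ on $M$.

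For (ii), $Z_{\gamma,\beta}\le 0$ reads pointwise $|\nabla u|^{2}+\beta u\,\Delta u\le 0$. Pick $(\gamma_{1},\beta_{1}),(\gamma_{2},\beta_{2})\in\mathcal A$ with $\beta_{1}>0>\beta_{2}$. The first pair gives $\beta_{1}u\,\Delta u\le-|\nabla u|^{2}\le 0$, hence $\Delta u\le 0$ on all of $M$; feeding this into the second pair, $|\nabla u|^{2}\le-\beta_{2}u\,\Delta u=|\beta_{2}|\,u\,\Delta u\le 0$, so $\nabla u\equiv 0$ and $U$ is constant. (Equivalently: $Z_{\gamma_{1},\beta_{1}}\le 0$ says $u^{a_{1}}$ is superharmonic with $a_{1}=1+\beta_{1}^{-1}>1$, and $Z_{\gamma_{2},\beta_{2}}\le 0$, for $\beta_{2}<-1$, says $u^{a_{2}}$ is subharmonic with $a_{2}=1+\beta_{2}^{-1}\in(0,1)$; comparing the two resulting inequalities for $u\,\Delta u+(a_{i}-1)|\nabla u|^{2}$ gives $(a_{1}-a_{2})|\nabla u|^{2}\le 0$.)

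The step I expect to be the real obstacle is the algebraic heart of (i): exhibiting an \emph{explicit} range of parameters $(\gamma,\beta,c)$ --- in particular one containing $\beta$--values of both signs, as needed for (ii), while keeping a single admissible constant $C_{n,\alpha}$ --- on which the quadratic form $Q$ is nonnegative with a definite gap precisely under $0<\alpha<\frac{n+2}{n-2}$. Because the exponent is borderline the bookkeeping is tight: one must track the Hessian cross--terms, the $h u^{\alpha-1}|\nabla u|^{2}$ term produced by differentiating the equation, the drift coefficient $c$, and the $|b|^{2}|\nabla u|^{2}$ terms created when the $b$--contributions are absorbed, and it is the joint balance of all of these that consumes the subcriticality and dictates the precise form of hypothesis (4). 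Everything else --- the strong maximum principle reduction, the Bochner identity, Calabi's trick, the Laplacian comparison theorem, and the limit $R\to\infty$ --- is routine.
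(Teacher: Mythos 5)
Your overall framework (Cheng--Yau type auxiliary function, Bochner identity, cutoff, maximum principle, Laplacian comparison) is the right one, but there are two substantial gaps that track exactly the places where the paper does something nontrivial.

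First, the sign bookkeeping around the auxiliary function is off in a way that matters. You work with $Z_{\gamma,\beta}=u^{\gamma-2}|\nabla u|^2+\beta u^{\gamma-1}\Delta u$ from \eqref{1204Z1}, which has no a priori sign once $b\not\equiv 0$, and you compensate with a two--parameter trick in step (ii) that needs the set $\mathcal A$ to contain $\beta$--values of both signs. The paper instead takes $Z=hU^{\gamma+\alpha}+\beta U^{\gamma-1}|\nabla U|^2$, which is manifestly $\ge 0$ once $h\ge 0$ and $\beta>0$, so that the maximum--principle estimate $\max_{B_R}Z\to 0$ \emph{alone} forces $Z\equiv 0$ and hence $\nabla U\equiv 0$. (Modulo substituting the PDE for $hU^\alpha$, the paper's $Z$ corresponds to a single sign of your $\beta$, namely the one with $a=1+\beta^{-1}\in(0,1)$; the constraint \eqref{1201equ2} $\beta\ge 0$ is imposed and used.) You give no evidence that the Bochner computation and the Young absorptions close for \emph{both} signs of $\beta$, and I do not see why they should: the leading second--order operator $\tfrac1{2\beta}(U^{t-\gamma+1}Z_i)_i$ and the sign of the $|\nabla^2u|^2-\tfrac1n(\Delta u)^2$ surplus are not symmetric under $\beta\mapsto-\beta$. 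Until you exhibit an explicit admissible pair with $\beta_1>0>\beta_2$ satisfying the analogue of conditions \eqref{1201equ1}--\eqref{1201equ2}, step (ii) is a hypothesis, not a deduction.

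Second, and more seriously, the sentence ``every term of $\mathcal E$ and every cutoff term is $o(R^{-2})(1+Z(p))$'' conceals the main difficulty of the theorem. After the maximum principle at $x_1$ the cutoff/drift terms carry a factor $U^{\gamma+1}$ (in the paper, \eqref{Young1} gives $W^2\le \tfrac{C}{R^2}U^{2\gamma+2}|b|^2\eta^{2\theta-2}+CU^{2\gamma+2}\eta^{2\theta-4}|\Delta\eta|^2$). For $\alpha$ up to $\frac{n}{n-2}$--ish one can take $\gamma=-1$ and $U^{2\gamma+2}=1$, and your estimate is fine; this is exactly Lemma \ref{lemma2}. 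But for $\alpha$ near the critical exponent $\frac{n+2}{n-2}$ the admissible $\gamma$ is strictly less than $-1$ (the paper shows $-\frac{n}{n-2}<\gamma<-1$), so $U^{2\gamma+2}$ is a \emph{negative} power of $U$ and can blow up where $U$ is small near the boundary of $B_{2R}$. Your $o(R^{-2})(1+Z(p))$ claim is then simply false without further input. The paper closes this gap with a Serrin--Zou/Catino--Monticelli barrier: comparing $U$ with $v=\rho^{2-n-\delta}$ via the Laplacian comparison theorem gives $U\ge c\,\rho^{2-n-\delta}$ outside a compact set, so $U^{2\gamma+2}\le C\rho^{(2-n-\delta)(2\gamma+2)}$ grows only polynomially in $R$, and a good choice of $\delta$ still makes $W(x_1)\to 0$. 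This lower bound on $U$ is precisely what lets the paper dispense with the hypothesis $h\gtrsim r^{-\sigma}$ of Catino--Monticelli, and it has no analogue in your argument. You should split into the two cases ($\gamma=-1$ versus $\gamma<-1$) as the paper does and insert the barrier in the second one; otherwise the proof only covers the smaller range $0<\alpha<\min\{\tfrac{1}{n-2}(\tfrac{2}{\sqrt n+1}+n),\tfrac{n+2}{n-1}\}$.
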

	This result improves the result of B. Gidas and J. Spruck when $b \equiv 0$ by relaxing the conditions on $h$ and $R_{i j}$.
	
	\begin{corollary}
		If $b \equiv 0$, $h \geq 0, \Delta h \geq 0$, $R_{i j} \geq 0$ and $0 < \alpha < \frac{n + 2}{n - 2}$, then every non-negative solution of \eqref{equ1} is constant.
	\end{corollary}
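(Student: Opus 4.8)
\noindent\emph{Proof proposal.} The natural plan is to deduce this Corollary from Theorem~\ref{theorem2} by taking $b\equiv 0$, and only one of the four hypotheses needs a moment's thought. Condition~(1) holds because $\nabla h\cdot b=0$ and we assume $h\ge 0$, $\Delta h\ge 0$; condition~(2) is assumed outright; condition~(3) is immediate since $|b|\equiv 0=o(\rho^{-1})$ and $R_{ij}\ge 0\ge -k(\rho)^{2}g_{ij}$ for, say, $k(\rho)=\rho^{-2}=o(\rho^{-1})$. The only point that is not literally in place is condition~(4): with $b\equiv 0$ it reads ``$R_{ij}$ positive definite'', whereas the hypothesis here is only $R_{ij}\ge 0$.

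To close that gap I would revisit the proof of Theorem~\ref{theorem2} and observe that strict positivity of the tensor in~(4) is used only in the very last step, to force $\nabla u\equiv 0$ once the gradient estimate $Z\le 0$ (for the auxiliary function $Z=u^{\gamma-2}|\nabla u|^{2}+\beta u^{\gamma-1}\Delta u$) has been established; the gradient estimate itself only needs that tensor to be positive \emph{semi}-definite, which $R_{ij}\ge 0$ supplies. And when $b\equiv 0$ the final step does not need strict positivity at all: dividing $Z\le 0$ by $u^{\gamma-1}>0$ and using $\Delta u=-hu^{\alpha}$ gives, for the sign of $\beta$ that makes $u$ subharmonic, $hu^{\alpha}\le 0$; since $h\ge 0$ and $u>0$ this forces $hu^{\alpha}\equiv 0$, hence $\Delta u\equiv 0$, and then $Z\le 0$ becomes $|\nabla u|^{2}\le 0$, i.e. $\nabla u\equiv 0$. (As in Theorem~\ref{theorem2} one first reduces to $u>0$ by the strong maximum principle, since $\Delta U=-hU^{\alpha}\le 0$.)

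So the real content is inherited from the proof of Theorem~\ref{theorem2}: the Bochner computation for $LZ$ with $b=0$ — Bochner formula plus one differentiation of the equation, the term $\Delta h\ge 0$ discarded with a good sign via~(1), the trace-free Hessian term kept non-negative, the Ricci term kept non-negative via $R_{ij}\ge 0$, and a surviving ``good'' term $c_{0}Z^{2}$ with $c_{0}>0$ precisely because $0<\alpha<\frac{n+2}{n-2}$ — followed by a cut-off argument on geodesic balls $B_{2R}(x_{0})$ using the Laplacian comparison theorem (here trivial, as $R_{ij}\ge 0$) to let $R\to\infty$ and conclude $Z\le 0$. The part I would want to double-check is exactly the claim in the previous paragraph: that with $b\equiv 0$ the derivation of $Z\le 0$ survives the weakening of~(4) from positive definite to positive semi-definite, and that the endgame then needs nothing more than $h\ge 0$. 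That, together with fixing the sign of $\beta$ correctly so that $Z\le 0$ really does deliver subharmonicity of $u$, are the only places where care is required; everything else is a routine specialization of Theorem~\ref{theorem2}.
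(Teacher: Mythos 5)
Your overall strategy matches the paper's: the corollary is meant to be a direct specialization of Theorem~\ref{theorem2} to $b\equiv 0$, and you rightly flag the one point that doesn't specialize literally, namely that condition~(4) with $b\equiv 0$ reads ``$R_{ij}$ positive definite'' whereas the corollary assumes only $R_{ij}\ge 0$. Your resolution of that gap is also correct in substance: in the proof of Theorem~\ref{theorem2}, the tensor from condition~(4) is used solely to discard the term $-U^{t}(b_{ij}-R_{ij})U_iU_j - (C_{n,\alpha}-\tfrac1n)|b|^2U^t|\nabla U|^2$ coming out of Lemmas~\ref{lemma1}--\ref{lemma2}, and for that positive \emph{semi}-definiteness is all that is needed; with $b\equiv 0$ this reduces to discarding $+U^tR_{ij}U_iU_j\ge 0$, which $R_{ij}\ge 0$ supplies.

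Two details in your write-up are off, though, and one of them leads you into an unnecessarily convoluted endgame. First, the auxiliary function actually used in the proof is $Z=hU^{\gamma+\alpha}+\beta U^{\gamma-1}|\nabla U|^2$ (defined at the start of Section~\ref{Proof of Theorems}), not the form $u^{\gamma-2}|\nabla u|^2+\beta u^{\gamma-1}\Delta u$ from the introduction, which is only mentioned as historical motivation; the two differ by a substitution using the PDE and by sign, and the proof's choice is made precisely so that $Z$ has a definite sign. Second, and as a consequence, your claim that strict positivity of~(4) is ``used only in the very last step to force $\nabla u\equiv 0$'' is not what happens: the proof never uses strictness anywhere. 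Once one has shown $\max_{B_R}Z\le 0$ for all large $R$, the conclusion is immediate because $h\ge 0$, $U>0$ and $\beta>0$ (the choice of parameters enforces $\beta\ge 0$, and in fact $\gamma+\alpha>0$, hence $\beta>0$) make $Z\ge 0$ pointwise; so $Z\equiv 0$, hence $\beta U^{\gamma-1}|\nabla U|^2\equiv 0$, hence $\nabla U\equiv 0$. No division by powers of $u$, no appeal to a ``sign of $\beta$ that makes $u$ subharmonic'' (note $\Delta U=-hU^\alpha\le 0$ makes $U$ \emph{super}harmonic, not subharmonic), and no need to separately deduce $hU^\alpha\equiv 0$ first. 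So your instinct that the corollary follows once semi-definiteness is seen to suffice is right, but the closing argument should be replaced by this one-line observation on the sign of the paper's actual $Z$.
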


	\textbf{Acknowledgement } The author was supported by National Natural Science Foundation of China (Grants 11721101 and 12141105) and National Key Research and Development Project (Grants SQ2020YFA070080). The author would like to thank Prof. Xi-Nan Ma  for valuable comments and suggestions.

	\section{Proof of Theorems}\label{Proof of Theorems}
	We suggest the reader only consider the case $b_i \equiv 0$ when reading the paper for the first time.
	Consider the equation \eqref{equ1} and let $U(x)$ be a non-negative solution of \eqref{equ1}. Then by strong maximum principle, we know $U(x) > 0$. We consider the following auxiliary function:
	\begin{equation}
		Z := hU^{\gamma + \alpha} + \beta U^{\gamma - 1}|\nabla U|^2,
	\end{equation}
	with
	\begin{align*}
		\gamma &= - \frac{2(n - 1)(\alpha + \varepsilon_1 + \varepsilon_2 t) }{(n + 2)(1 - \varepsilon_2)} + 1,\\
		\beta &= \frac{n(1 - \varepsilon_2)(\gamma + \alpha)}{2(1 - n\varepsilon_2)} .
	\end{align*}
	where $\varepsilon_2 > 0$ is small enough and $\varepsilon_1 , t$ will be determined later.
	From now on, let us follow the idea of J. Y. Li in \cite{LJY}:
	\begin{lemma}\label{lemma1}
		Let $M$ be a n-dimensional complete Riemannian manifold with Ricci curvature $R_{i j}$. Suppose that $h \in C^2(M)$ , $b \in \mathscr{X}(M)$ and $\alpha \in \mathbb R$ satisfy the following conditions:
		\begin{itemize}
			\item (1) $\forall x \in M$, $h(x) \geq 0$, $\Delta h(x) + \nabla h(x) \cdot b(x) \geq 0$,
			\item (2) $0 < \alpha < \frac{n + 2}{n - 2}$.
		\end{itemize} 
		Then we can choose $\gamma, \beta, t$ and $\varepsilon > 0$, such that
		\begin{equation}
			\begin{aligned}
				\frac{1}{2\beta}(U^{t - \gamma + 1}Z_i)_i &\geq - \frac{1}{2\beta} U^{t - \gamma + 1}Z_i b_i -  \left(C_{n, \alpha} - \frac{1}{n}\right) |b|^2 U^t |\nabla U|^2  - U^{t} (b_{i j} - R_{i j}) U_i U_j  \\
		&+ \varepsilon U^{t + 2\alpha} h^2  + \varepsilon U^{t - 2}|\nabla U|^4.
			\end{aligned}
		\end{equation}
	\end{lemma}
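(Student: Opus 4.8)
The plan is to compute the Bochner-type identity for the auxiliary function $Z = hU^{\gamma+\alpha} + \beta U^{\gamma-1}|\nabla U|^2$ directly, using the equation $\Delta U + b\cdot\nabla U + hU^\alpha = 0$ to eliminate second derivatives of $U$ wherever possible, and then organize the resulting terms so that the bad terms can be absorbed. I would first record the basic differential consequences: $\nabla_i(\Delta U) = -\nabla_i(b\cdot\nabla U) - \nabla_i(hU^\alpha)$, and $\Delta(|\nabla U|^2) = 2|\nabla^2 U|^2 + 2\nabla U\cdot\nabla(\Delta U) + 2R_{ij}U_iU_j$ from the usual Bochner formula. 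Then I would expand $\Delta(U^{\gamma-1}|\nabla U|^2)$ and $\Delta(hU^{\gamma+\alpha})$ term by term. The key algebraic input is the refined Cauchy-Schwarz inequality $|\nabla^2 U|^2 \geq \frac{1}{n}(\Delta U)^2 + \frac{n}{n-1}\big|\nabla^2 U \cdot \tfrac{\nabla U}{|\nabla U|} - \tfrac{\Delta U}{n}\tfrac{\nabla U}{|\nabla U|}\big|^2$ (or an equivalent version), which is exactly what produces the ``$\frac{n+2}{n-2}$'' threshold and forces the particular choices of $\gamma$ and $\beta$ in terms of $\varepsilon_1,\varepsilon_2,t$ displayed before the lemma.

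The second step is to convert this pointwise expansion into the divergence-form inequality claimed. I would multiply through by the weight $U^{t-\gamma+1}$ and recognize $(U^{t-\gamma+1}Z_i)_i$ on the left; the point of the weight is that when differentiating $Z_i$ one generates a term $\nabla U \cdot \nabla Z$ whose leading part combines with the gradient terms coming from the Bochner computation. The drift term $b$ is handled by writing $\Delta U = -b\cdot\nabla U - hU^\alpha$ and carefully tracking where $b$-linear and $b$-quadratic terms appear: the linear-in-$b$ cross terms get grouped into the $-\frac{1}{2\beta}U^{t-\gamma+1}Z_ib_i$ term and into the $-U^t(b_{ij}-R_{ij})U_iU_j$ term (here $b_{ij}$ must denote $\frac12(\nabla_i b_j + \nabla_j b_i)$, the symmetrized covariant derivative), while the quadratic-in-$b$ terms are bounded by $(C_{n,\alpha}-\frac1n)|b|^2U^t|\nabla U|^2$. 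What remains after this bookkeeping is a quadratic form in the two ``independent'' quantities $U^{\alpha}h$ and $U^{-1}|\nabla U|^2$ (together with the trace-free Hessian term, which is nonnegative and can be discarded), and the claim is that for a suitable choice of the free parameters $t$ and $\varepsilon_1$ this quadratic form is bounded below by $\varepsilon(U^{\alpha}h)^2 + \varepsilon(U^{-1}|\nabla U|^2)^2$ times $U^t$.

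The main obstacle is precisely this last algebraic point: showing that the $2\times 2$ quadratic form in $(U^{\alpha}h,\ U^{-1}|\nabla U|^2)$ that emerges is positive definite, uniformly, for some admissible parameter values. This is where the hypothesis $0<\alpha<\frac{n+2}{n-2}$ must be used in an essential way — it is exactly the condition under which the discriminant of that quadratic form can be made negative by choosing $\gamma$ (hence $\beta$) appropriately, with $\varepsilon_2$ small and $\varepsilon_1, t$ tuned so that strict inequality survives with a definite gap $\varepsilon>0$. I would therefore isolate the computation of the coefficients of this quadratic form as the crux, verify the diagonal coefficients are positive and the off-diagonal/discriminant condition holds on the whole range of $\alpha$, and only then collect everything, using condition (1) ($h\geq0$ and $\Delta h + \nabla h\cdot b\geq 0$) to discard the one remaining uncontrolled term involving $\Delta h + b\cdot\nabla h$, which has a favorable sign. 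The rest — expanding derivatives, integrating by parts formally, completing squares — is routine but lengthy, so I would present the coefficient computation carefully and compress the bookkeeping.
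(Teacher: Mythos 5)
Your plan follows the paper's own proof essentially step for step: expand $(U^{t-\gamma+1}Z_i)_i$ via the Ricci commutation formula $U_{jii}=U_{iij}+R_{ij}U_i$, substitute the equation to remove $\Delta U$, split the Hessian into trace and trace-free parts $E_{ij}$, use the refined Kato-type inequality $|E|^2 \geq \frac{n}{n-1}(E_{ij}U_iU_j)^2/|\nabla U|^4$ (your ``refined Cauchy--Schwarz''), sort the linear and quadratic $b$-contributions into the $Z_i b_i$, $b_{ij}$ and $|b|^2$ slots, drop the $(\Delta h + b\cdot\nabla h)$ term by hypothesis (1) since $\beta\geq 0$, and then tune $\gamma, t, \varepsilon_1, \varepsilon_2$ so that a discriminant condition (the paper's $F\geq 0$) makes the residual quadratic form coercive, with solvability of that condition being exactly what uses $0<\alpha<\frac{n+2}{n-2}$. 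One small imprecision worth noting: the decisive $2\times 2$ discriminant is really on the pair $\left(U^{-1}|\nabla U|^2,\ E_{ij}U_iU_j/|\nabla U|^2\right)$ rather than on $(U^\alpha h,\ U^{-1}|\nabla U|^2)$ --- the $h^2U^{2\alpha}$ term arrives with a fixed good sign through the small $\varepsilon_2$-fraction retained from $(\Delta U)^2$, and the cross term $hU^{\alpha-1}|\nabla U|^2$ is nonnegative by $h\geq 0$ --- but this is a matter of bookkeeping and does not change the argument.
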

	
	\begin{lemma}\label{lemma2}
		Let $M$ be a n-dimensional complete Riemannian manifold with Ricci curvature $R_{i j}$. Suppose that $h \in C^2(M)$ , $b \in \mathscr{X}(M)$ and $\alpha \in \mathbb R$ satisfy the following conditions:
		\begin{itemize}
			\item (1) $\forall x \in M$, $h(x) \geq 0$, $\Delta h(x) + \nabla h(x) \cdot b(x) \geq 0$,
			\item (2) $0 < \alpha < \min \left\{\frac{1}{n - 2}\left(\frac{2}{\sqrt n + 1} + n\right), \frac{n + 2}{n - 1} \right\}$,
			\item (3) $\gamma = - 1$.
		\end{itemize} 
		Then we can choose $\beta, t$ and $\varepsilon > 0$, such that
		\begin{equation}
			\begin{aligned}
				\frac{1}{2\beta}(U^{t - \gamma + 1}Z_i)_i &\geq - \frac{1}{2\beta} U^{t - \gamma + 1}Z_i b_i -  \left(C_{n, \alpha} - \frac{1}{n}\right)|b|^2 U^t |\nabla U|^2  - U^{t} (b_{i j} - R_{i j})U_i U_j  \\
		&+ \varepsilon U^{t + 2\alpha} h^2  + \varepsilon U^{t - 2}|\nabla U|^4.
			\end{aligned}
		\end{equation}
	\end{lemma}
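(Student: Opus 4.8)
The plan is to run the very same computation that proves Lemma~\ref{lemma1}, with the single change that the exponent $\gamma$ is now frozen at $\gamma=-1$ instead of being tuned to $\alpha$. So I keep the auxiliary function $Z=hU^{\gamma+\alpha}+\beta U^{\gamma-1}|\nabla U|^2$ with $\gamma=-1$, the same relation $\beta=\frac{n(1-\varepsilon_2)(\gamma+\alpha)}{2(1-n\varepsilon_2)}=\frac{n(1-\varepsilon_2)(\alpha-1)}{2(1-n\varepsilon_2)}$, and I treat $t$ and the auxiliary small number $\varepsilon_2$ as the parameters still to be fixed ($\varepsilon>0$ is read off at the end). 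Since the one degree of freedom, namely $\gamma$, that Lemma~\ref{lemma1} used to absorb the error terms over the whole interval $0<\alpha<\frac{n+2}{n-2}$ has been spent, the range of $\alpha$ for which the argument closes must shrink, and the content of the lemma is precisely how far.

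For the computation I first write
\[
\tfrac{1}{2\beta}(U^{t-\gamma+1}Z_i)_i=\tfrac{1}{2\beta}U^{t-\gamma+1}(\Delta Z+b\cdot\nabla Z)+\tfrac{t-\gamma+1}{2\beta}U^{t-\gamma}\langle\nabla U,\nabla Z\rangle-\tfrac{1}{2\beta}U^{t-\gamma+1}Z_ib_i,
\]
so that the term $-\tfrac{1}{2\beta}U^{t-\gamma+1}Z_ib_i$ is produced automatically and it remains to bound the ``drift-Laplacian'' part from below. Setting $L:=\Delta+b\cdot\nabla$ and using $LU=-hU^{\alpha}$, I expand $L(hU^{\gamma+\alpha})=U^{\gamma+\alpha}(\Delta h+b\cdot\nabla h)+2\langle\nabla h,\nabla(U^{\gamma+\alpha})\rangle+hL(U^{\gamma+\alpha})$, and I expand $L(U^{\gamma-1}|\nabla U|^2)$ with the Bochner formula $\tfrac12\Delta|\nabla U|^2=|\nabla^2U|^2+\langle\nabla U,\nabla\Delta U\rangle+R_{ij}U_iU_j$, replacing $\Delta U$ and $\nabla\Delta U$ by means of the equation throughout. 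Condition~(1) then lets me discard the term $\tfrac{1}{2\beta}U^{t+\alpha+1}(\Delta h+b\cdot\nabla h)$ (its sign has to be checked, and is governed by that of $\beta$); the residual $\nabla h$-terms all carry the same factor $U^{t+\alpha}\langle\nabla h,\nabla U\rangle$ and cancel once $t$ is chosen so that their coefficients sum to zero, exactly as in \cite{LJY}; and every genuinely $b$-dependent remainder is absorbed into $-(C_{n,\alpha}-\tfrac1n)|b|^2U^t|\nabla U|^2-U^t(b_{ij}-R_{ij})U_iU_j$ by taking $C_{n,\alpha}$ large, just as in Lemma~\ref{lemma1}; this last part does not feel the choice $\gamma=-1$.

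What is left is $U^{t+2}$ times a quadratic expression in the second-order data: it involves $|\nabla^2U|^2$, $(\Delta U)^2$ and $\nabla^2U(\nabla U,\nabla U)$, together with the first-order quantities $hU^{\alpha}$ and $|\nabla U|^2/U$. I bound this from below by the refined Bochner--Kato inequality --- split $\nabla^2U$ into $\tfrac1n(\Delta U)g$ plus its trace-free part and use $|(\text{trace-free part})|^2\ge\tfrac{n}{n-1}(\text{normal--normal component})^2$ along with $|\nabla^2U|^2\ge\tfrac1n(\Delta U)^2$ --- then substitute $\Delta U=-hU^{\alpha}-b\cdot\nabla U$ and minimise over the one remaining scalar, the normal--normal Hessian fluctuation, which enters quadratically with a nonnegative coefficient. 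What survives is $U^{t+2}\,Q(X,Y)$ with $X:=hU^{\alpha-1}\ge0$, $Y:=U^{-2}|\nabla U|^2\ge0$ and $Q(X,Y)=AX^2+2BXY+CY^2$; since $X,Y\ge0$ it is enough to produce $\varepsilon>0$ with $Q(X,Y)\ge\varepsilon(X^2+Y^2)$ for all $X,Y\ge0$, whereupon the good terms $\varepsilon U^{t+2\alpha}h^2=\varepsilon U^{t+2}X^2$ and $\varepsilon U^{t-2}|\nabla U|^4=\varepsilon U^{t+2}Y^2$ materialise.

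The crux is this last positivity. Letting $\varepsilon_2\to0$ with $t$ at the value forced above, $A,B,C$ become explicit functions of $n$ and $\alpha$ alone, and $Q$ is positive on $\{X\ge0,\,Y\ge0\}\setminus\{(0,0)\}$ iff $A>0$, $C>0$ and ($B\ge0$ or $AC>B^2$). Carrying this out is exactly where the two bounds of condition~(2) are born: the $\sqrt n$ in $\frac{1}{n-2}\big(\frac{2}{\sqrt n+1}+n\big)$ is the footprint of the discriminant $AC-B^2$ once the constants $\tfrac1n$ and $\tfrac{n}{n-1}$ of the refined Kato inequality have been inserted, while $\alpha<\frac{n+2}{n-1}$ is the sign condition keeping $\gamma+\alpha=\alpha-1$ --- hence $\beta$, and the weight exponents used above --- of the sign the argument demands. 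Being strict, these inequalities survive for $\varepsilon_2$ small, which fixes the admissible $\varepsilon>0$. I expect the honest difficulties to be (i) the bookkeeping: tracking every occurrence of $|\nabla^2U|^2$, $(\Delta U)^2$ and $\nabla^2U(\nabla U,\nabla U)$ and peeling off the $b$ and Ricci parts cleanly; and (ii) the fact that $\beta<0$ once $\alpha<1$, so several signs flip relative to the $\alpha>1$ case and one must verify that the net coefficient of $|\nabla^2U|^2$ in the second-order bundle, as well as the sign of the discarded $(\Delta h+b\cdot\nabla h)$-term, remain as required --- which they do thanks to the precise value chosen for $\beta$.
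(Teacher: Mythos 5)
Your overall plan matches the paper's: reuse the computation of Lemma~\ref{lemma1} verbatim, freeze $\gamma=-1$, let $\varepsilon_2\to 0$, and then verify the list of conditions \eqref{1201equ1}--\eqref{1201equ2}, of which the decisive one is $F>0$ where $F=\tfrac{4n}{n-1}\cdot\tfrac{\gamma-1}{2}(t-1)-(t+\gamma-1)^2$. With $\gamma=-1$ and $t=(n-2)\alpha-n$ this becomes $F=\tfrac{4n}{n-1}(1-t)-(t-2)^2$, whose positive set is a bounded interval $\tfrac{-2}{\sqrt n-1}<t<\tfrac{2}{\sqrt n+1}$, i.e.\
\begin{equation*}
\frac{1}{n-2}\Bigl(n-\frac{2}{\sqrt n-1}\Bigr)<\alpha<\frac{1}{n-2}\Bigl(n+\frac{2}{\sqrt n+1}\Bigr).
\end{equation*}
Here is the gap: you only register the right endpoint of this interval as the ``footprint of the discriminant'' and tacitly assume the positivity argument covers all of $0<\alpha<\min\{\ldots\}$. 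It does not. For $n\ge 3$ the left endpoint $\tfrac{1}{n-2}\bigl(n-\tfrac{2}{\sqrt n-1}\bigr)$ is strictly positive, so the whole sub-range $0<\alpha\le\tfrac{1}{n-2}\bigl(n-\tfrac{2}{\sqrt n-1}\bigr)$ is left untreated by the $\gamma=-1$ computation. The paper disposes of this sub-range at the outset by invoking J.\ Y.\ Li's Theorem~2.2, and your proposal has no substitute for that step.

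Two smaller inaccuracies reinforce the gap. First, the bound $\alpha<\tfrac{n+2}{n-1}$ is not a ``sign condition on $\gamma+\alpha$''; it comes from the identity $\gamma=-1\iff\alpha+\varepsilon_1=\tfrac{n+2}{n-1}$ (at $\varepsilon_2=0$) together with $\varepsilon_1>0$. Second, your closing remark that for $\alpha<1$ (hence $\beta<0$) ``the signs remain as required thanks to the precise value chosen for $\beta$'' is not correct: condition \eqref{1201equ2} demands $\beta\ge 0$, and with $\gamma=-1$ one has $\beta=\tfrac{n(1-\varepsilon_2)(\alpha-1)}{2(1-n\varepsilon_2)}<0$ whenever $\alpha<1$, so the discarded $(\Delta h+b\cdot\nabla h)$ term then carries the wrong sign and the argument collapses --- another reason the low-$\alpha$ regime must be delegated to Li's result rather than to this choice of parameters.
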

	
	\begin{proof}[Proof of Lemma \ref{lemma1}]
		Let $e_1, e_2, ... , e_n$ be a local orthonormal frame field. By adopting the notation of moving frames, subscripts in $i, j$ and $l$ will denote covariant differentiation in the $e_i, e_j$ and $e_l$ directions, where $1 \leq i, j, l \leq n$. Suppose $b = b_i e_i$, then
	\begin{align*}
		Z_i &= h_i U^{\gamma + \alpha}  + (\gamma + \alpha)h U^{\gamma + \alpha - 1}U_i + \beta(\gamma - 1) U^{\gamma - 2}|\nabla U|^2U_i + 2\beta  U^{\gamma - 1}U_{i j}U_j,
	\end{align*}
	and
	\begin{align*}
		 &\frac{1}{2\beta}(U^{t - \gamma + 1}Z_i)_i\\
		 =&\frac{1}{2\beta} \Big[ h_i U^{t + \alpha + 1}  + (\gamma + \alpha)h U^{t + \alpha }U_i + \beta(\gamma - 1) U^{t - 1}|\nabla U|^2U_i + 2\beta  U^{t}U_{i j}U_j \Big]_i \\
		 =&\frac{1}{2\beta} h_{ii}U^{t + \alpha + 1} + \frac{1}{2\beta}(t + \alpha + 1)U^{t + \alpha} U_i h_i\\
		 &+ \frac{\gamma + \alpha}{2\beta}U^{t + \alpha} U_i h_i + \frac{\gamma + \alpha}{2\beta}U^{t + \alpha} \Delta U h  + \frac{\gamma + \alpha} {2\beta}(t + \alpha)U^{t + \alpha - 1} |\nabla U|^2 h\\
		 &+ \frac{\gamma - 1}{2}(t - 1) U^{t - 2}|\nabla U|^4 + (\gamma - 1)U^{t - 1}U_{i j}U_i U_j + \frac{\gamma - 1}{2}U^{t - 1}|\nabla U|^2 \Delta U\\
		 &+ tU^{t - 1}U_{i j}U_jU_i + U^{t}U_{j i i}U_j + U^{t}U_{i j}^2.
	\end{align*}
	Substitute \eqref{equ1} and using the fact that
	\begin{equation}
		U_{j ii } = U_{i i j} + R_{i j}U_i,
	\end{equation}
	we get
	\begin{align*}
		 &\frac{1}{2\beta}(U^{t - \gamma + 1}Z_i)_i\\
		 =&\frac{1}{2\beta} h_{ii}U^{t + \alpha + 1} + \frac{1}{2\beta}(t + \alpha + 1)U^{t + \alpha} U_i h_i\\
		 &+ \frac{\gamma + \alpha}{2\beta}U^{t + \alpha} U_i h_i + \frac{\gamma + \alpha}{2\beta}U^{t} \Delta U (-\Delta U - b_i U_i)  + \frac{\gamma + \alpha} {2\beta}(t + \alpha)U^{t - 1} |\nabla U|^2 (-\Delta U - b_i U_i) \\
		 &+ \frac{\gamma - 1}{2}(t - 1) U^{t - 2}|\nabla U|^4 + (\gamma - 1)U^{t - 1}U_{i j}U_i U_j + \frac{\gamma - 1}{2}U^{t - 1}|\nabla U|^2 \Delta U\\
		 &+ tU^{t - 1}U_{i j}U_jU_i + U^{t}U_j(U_{i i j} + R_{i j}U_i) + U^{t}U_{i j}^2,
	\end{align*}
	\begin{align*}
		 \Rightarrow &\frac{1}{2\beta}(U^{t - \gamma + 1}Z_i)_i\\
		 =&\frac{1}{2\beta} h_{ii}U^{t + \alpha + 1} + \frac{1}{2\beta}(t + \alpha + 1)U^{t + \alpha} U_i h_i\\
		 &+ \frac{\gamma + \alpha}{2\beta}U^{t + \alpha} U_i h_i - \frac{\gamma + \alpha}{2\beta}U^{t} \Delta U   b_i U_i  - \frac{\gamma + \alpha} {2\beta}(t + \alpha)U^{t - 1} |\nabla U|^2 b_i U_i \\
		 &+ \frac{\gamma - 1}{2}(t - 1) U^{t - 2}|\nabla U|^4 + (t + \gamma - 1)U^{t - 1}E_{i j}U_i U_j \\
		 &+ \alpha U^{t - 1}|\nabla U|^2 b_i U_i - U^{t}U_j( b_{i j} U_i + b_i U_{i j} + h_j U^{\alpha} ) + U^{t}U_jR_{i j}U_i + U^{t}E_{i j}^2\\
		 &+ \left( \frac{1}{n} - \frac{\gamma + \alpha}{2\beta}\right)U^t (\Delta U)^2 + \left[ - \frac{\gamma + \alpha} {2\beta}(t + \alpha) + \frac{t + \gamma - 1}{n} + \frac{\gamma - 1}{2} + \alpha \right]U^{t - 1}|\nabla U|^2\Delta U,
	\end{align*}
	\begin{equation}\label{final_equ1}
		\begin{aligned}
			\Rightarrow &\frac{1}{2\beta}(U^{t - \gamma + 1}Z_i)_i\\
		  =&\frac{1}{2\beta} h_{ii}U^{t + \alpha + 1} + \left[ \frac{1}{2\beta}(t + \alpha + 1) + \frac{1 - n\varepsilon_2}{n(1 - \varepsilon_2)} \right] U^{t + \alpha} U_i h_i - \frac{1 - n\varepsilon_2}{n(1 - \varepsilon_2)} U^{t} \Delta U   b_i U_i \\
		  &+ \left[ \alpha - \frac{1 - n\varepsilon_2}{n(1 - \varepsilon_2)}(t + \alpha)\right] U^{t - 1} |\nabla U|^2 b_i U_i + \frac{\gamma - 1}{2}(t - 1) U^{t - 2}|\nabla U|^4 + (t + \gamma - 1)U^{t - 1}E_{i j}U_i U_j \\
		 & - U^{t}U_j( b_{i j} U_i + b_i U_{i j} + h_j U^{\alpha} ) + U^{t}U_jR_{i j}U_i + U^{t}E_{i j}^2\\
		 &+  \frac{n - 1}{n}\cdot \frac{\varepsilon_2}{1 - \varepsilon_2}U^t (\Delta U)^2 - \frac{(n - 1)\varepsilon_1}{n(1 - \varepsilon_2)}U^{t - 1}|\nabla U|^2\Delta U.
		\end{aligned}
	\end{equation}
	On the other hand, we have
	\begin{align*}
		Z_i b_i &= h_ib_i U^{\gamma + \alpha}  + (\gamma + \alpha)h U^{\gamma + \alpha - 1}U_i b_i + \beta(\gamma - 1) U^{\gamma - 2}|\nabla U|^2U_i b_i+ 2\beta  U^{\gamma - 1}U_{i j}U_jb_i,
	\end{align*}
	\begin{equation}
		\Rightarrow U^{t - \gamma + 1}Z_i b_i = h_ib_i U^{t + \alpha + 1}  + (\gamma + \alpha)h U^{t + \alpha}U_i b_i + \beta(\gamma - 1) U^{t - 1}|\nabla U|^2U_i b_i+ 2\beta  U^{t}U_{i j}U_jb_i.
	\end{equation}
	Substitute it into \eqref{final_equ1}:
	\begin{equation}
		\begin{aligned}
			 &\frac{1}{2\beta}(U^{t - \gamma + 1}Z_i)_i\\
		  =&\frac{1}{2\beta} (h_{ii} + h_i b_i) U^{t + \alpha + 1} + \left[ \frac{1}{2\beta}(t + \alpha + 1) + \frac{1 - n\varepsilon_2}{n(1 - \varepsilon_2)} \right] U^{t + \alpha} U_i h_i \\
		  &+ \frac{1}{2\beta} \Big[ (\gamma + \alpha)h U^{t + \alpha}U_i b_i + \beta(\gamma - 1) U^{t - 1}|\nabla U|^2U_i b_i+ 2\beta  U^{t}U_{i j}U_jb_i - U^{t - \gamma + 1}Z_i b_i\Big]  \\
		 & + \frac{1 - n\varepsilon_2}{n(1 - \varepsilon_2)}U^{t} (b_j U_j + hU^{\alpha})   b_i U_i + \left[ \alpha - \frac{1 - n\varepsilon_2}{n(1 - \varepsilon_2)}(t + \alpha)\right]U^{t - 1} |\nabla U|^2 b_i U_i \\
		 &+ \frac{\gamma - 1}{2}(t - 1) U^{t - 2}|\nabla U|^4 + (t + \gamma - 1)U^{t - 1}E_{i j}U_i U_j \\
		 & - U^{t}U_j( b_{i j} U_i + b_i U_{i j} + h_j U^{\alpha} ) + U^{t}U_jR_{i j}U_i + U^{t}E_{i j}^2\\
		 &+  \frac{n - 1}{n}\cdot \frac{\varepsilon_2}{1 - \varepsilon_2}U^t (\Delta U)^2 - \frac{(n - 1)\varepsilon_1}{n(1 - \varepsilon_2)}U^{t - 1}|\nabla U|^2\Delta U.\\
		\end{aligned}
	\end{equation}
	\begin{equation}\label{final_equ2}
		\begin{aligned}
		\Rightarrow	 &\frac{1}{2\beta}(U^{t - \gamma + 1}Z_i)_i\\
		 =&- \frac{1}{2\beta} U^{t - \gamma + 1}Z_i b_i + \frac{1}{2\beta} (h_{ii} + h_i b_i) U^{t + \alpha + 1} + \left[ \frac{1}{2\beta}(t + \alpha + 1) + \frac{1 - n\varepsilon_2}{n(1 - \varepsilon_2)} - 1\right] U^{t + \alpha} U_i h_i\\
		  &+  2\frac{1 - n\varepsilon_2}{n(1 - \varepsilon_2)} h U^{t + \alpha}U_i b_i + \Bigg[ \frac{1}{2}(\gamma - 1) + \alpha - \frac{1 - n\varepsilon_2}{n(1 - \varepsilon_2)}(t + \alpha) \Bigg] U^{t - 1}|\nabla U|^2U_i b_i  \\
		 & + \frac{1 - n\varepsilon_2}{n(1 - \varepsilon_2)}U^t b_j U_j    b_i U_i  + U^{t}(R_{i j} - b_{i j})U_i U_j  \\
		 &+ \frac{\gamma - 1}{2}(t - 1) U^{t - 2}|\nabla U|^4 + (t + \gamma - 1)U^{t - 1}E_{i j}U_i U_j + U^{t}E_{i j}^2\\
		  &+  \frac{n - 1}{n}\cdot \frac{\varepsilon_2}{1 - \varepsilon_2}U^t (\Delta U)^2 - \frac{(n - 1)\varepsilon_1}{n(1 - \varepsilon_2)}U^{t - 1}|\nabla U|^2\Delta U.\\
		\end{aligned}
	\end{equation}
	Substitute \eqref{equ1} into \eqref{final_equ2} :
	
	\begin{align}\label{final_equ3}
		 &\frac{1 - \varepsilon_2}{2\beta}(U^{t - \gamma + 1}Z_i)_i + \frac{1 - \varepsilon_2}{2\beta} U^{t - \gamma + 1}Z_i b_i \notag\\
		  =&  \frac{1 - \varepsilon_2}{2\beta} (h_{ii} + h_i b_i) U^{t + \alpha + 1} + (1 - \varepsilon_2) \left[ \frac{1}{2\beta}(t + \alpha + 1) + \frac{1 - n\varepsilon_2}{n(1 - \varepsilon_2)} - 1\right] U^{t + \alpha} U_i h_i \notag\\
		  &+  2\frac{1 - \varepsilon_2}{n} h U^{t + \alpha}U_i b_i + (1 - \varepsilon_2)\Bigg[ \frac{1}{2}(\gamma - 1) + \alpha - \frac{1 - n\varepsilon_2}{n(1 - \varepsilon_2)}(t + \alpha) + \frac{(n - 1)\varepsilon_1}{n}\Bigg] U^{t - 1}|\nabla U|^2U_i b_i  \notag\\
		 & + \frac{1 - \varepsilon_2}{n}U^t b_j U_j    b_i U_i + (1 - \varepsilon_2) U^{t}(R_{i j} - b_{i j})U_i U_j  \notag\\
		 &+(1 - \varepsilon_2) \frac{\gamma - 1}{2}(t - 1) U^{t - 2}|\nabla U|^4 + (1 - \varepsilon_2) (t + \gamma - 1)U^{t - 1}E_{i j}U_i U_j + U^{t}E_{i j}^2 \notag\\
		  &+  \frac{n - 1}{n} \varepsilon_2 h^2 U^{t + 2\alpha} + \frac{(n - 1)\varepsilon_1}{n}hU^{t + \alpha - 1}|\nabla U|^2.
	\end{align}
	By the H\"older's inequality, we have
	\begin{equation}
		 U^{t - 1}|\nabla U|^2U_i b_i \leq \varepsilon U^{t - 2}|\nabla U|^4 + \frac{C}{\varepsilon} U^t b_j U_j    b_i U_i,
	\end{equation}
	and
	\begin{equation}
		h U^{t + \alpha}U_i b_i \leq \varepsilon  h^2 U^{t + 2\alpha} + \frac{C}{\varepsilon} U^t b_j U_j    b_i U_i.
	\end{equation}
	Define
	\begin{equation}
		F := \frac{4n}{n - 1}\cdot \frac{\gamma - 1}{2}(t - 1) - (t + \gamma - 1)^2.
	\end{equation}
	We hope that
	\begin{numcases}{}
		\label{1201equ1} \frac{1}{2\beta}(t + \alpha + 1) + \frac{1 - n\varepsilon_2}{n(1 - \varepsilon_2)} - 1 = 0,\\
		 F \geq 0,\\
		0 < \varepsilon_1 < 1,\\
		0 < \varepsilon_2 < \frac{1}{n} ,\\
		\label{1201equ2} \beta \geq 0.
	\end{numcases}
	If so, then the lemma holds naturally. In this paper, we always let $\varepsilon_2 > 0$ small enough but $\varepsilon_1$ is different.
	\begin{itemize}
		\item Let $\varepsilon_1, \varepsilon_2 = 0$, then by \eqref{1201equ1}
	\begin{equation}
		\begin{aligned}
			t&= (n - 1)\gamma + (n - 2)\alpha - 1\\ 
			&= - \frac{2(n - 1)^2\alpha  }{n + 2}  + (n - 2)\alpha + n - 2\\
			&= \frac{-n^2 + 4n - 6}{n + 2}\alpha + n - 2,
		\end{aligned}
	\end{equation}
	and
	\begin{equation}
		\begin{aligned}
			F=& \frac{-n^4 + 8n^3 - 28n^2 + 40n - 16}{(n + 2)^2}\alpha^2 + \frac{2n^3 - 12n^2 + 28n - 16}{n + 2}\alpha - (n - 2)^2.
		\end{aligned}
	\end{equation}
	We find that if 
	\begin{equation}
		\frac{n + 2}{n - 2}\cdot\frac{(n - 2)^3}{(n - 2)^3 + 4n} < \alpha < \frac{n + 2}{n - 2},
	\end{equation}
	then	
	\begin{equation}
		F > 0.
	\end{equation}
	
	\item If $\varepsilon_2= 0, \varepsilon_1 > 0$, we have
	\begin{equation}
		\begin{aligned}
			t &= (n - 1)\gamma + (n - 2)\alpha - 1\\
			&= - (n - 1)\frac{2(n - 1)(\alpha + \varepsilon_1) }{n + 2} + (n - 2)\alpha + n - 2\\
			&= \frac{-n^2 + 4n - 6}{n + 2}\alpha - \frac{2(n - 1)^2}{n + 2}\varepsilon_1 + n - 2,
		\end{aligned}
	\end{equation}
	and
	\begin{equation}
		\begin{aligned}
			\Rightarrow F &= \frac{-n^4 + 8n^3 - 28n^2 + 40n - 16}{(n + 2)^2}(\alpha + \varepsilon_1) ^2 + \frac{2n^3 - 12n^2 + 28n - 16}{n + 2}(\alpha + \varepsilon_1)(1 - \varepsilon_1)\\
			& - (n - 2)^2(1 - \varepsilon_1)^2 + \frac{4n}{n + 2}(\alpha + \varepsilon_1)\varepsilon_1. \\
		\end{aligned}
	\end{equation}
	
	So we get that if
	\begin{equation}\label{range}
		(1 - \varepsilon_1)\frac{n + 2}{n - 2}\cdot\frac{(n - 2)^3}{(n - 2)^3 + 4n} \leq \alpha + \varepsilon_1 \leq (1 - \varepsilon_1)\frac{n + 2}{n - 2} ,
	\end{equation}
	then
	\begin{equation}
		F > 0.
	\end{equation}
	We find that the parameter $\varepsilon_1$ is important: when $\alpha$ tends to $\frac{n + 2}{n - 2}$, we can choose  $\varepsilon_1$ tends to $0$; when $\alpha$ tends to $0$, we can choose  $ \varepsilon_1$ tends to $1$. But in general, we have 
	\begin{equation}
		\alpha + \varepsilon_1 < \frac{n + 2}{n - 2},
	\end{equation}
	and
	\begin{equation}
		\gamma > -\frac{n}{n - 2}.
	\end{equation}
	
	\end{itemize}
	
	\end{proof}

	\begin{proof}[Proof of lemma \ref{lemma2}]
		
		Using the Theorem 2.2 in \cite{LJY}, we only need to consider the case $\alpha \geq \frac{n}{n - 2}$ when $n \geq 4$ and $\alpha \geq \frac{n + 4}{n}$ when $n = 2, 3$. Similarly to the proof of lemma \ref{lemma1}, we let $\varepsilon_2 = 0$, then
	\begin{equation}
		\begin{aligned}
			t &= (n - 1)\gamma + (n - 2)\alpha - 1.
		\end{aligned}
	\end{equation}
	Let $\gamma = -1$, then
	\begin{equation}
		t = (n - 2)\alpha - n.
	\end{equation}
		When 
	\begin{equation}
		\frac{-2}{\sqrt n - 1} < t < \frac{2}{\sqrt n + 1},
	\end{equation}
	we have
	\begin{equation}
		\begin{aligned}
			F=&\frac{4n}{n - 1}(1 - t) - (t - 2)^2 > 0.
		\end{aligned}
	\end{equation}
	So we know when $\gamma = -1$, it follows that
	\begin{align*}
		\frac{1}{n - 2}\left(\frac{-2}{\sqrt n - 1} + n \right) < \alpha < \frac{1}{n - 2}\left(\frac{2}{\sqrt n + 1} + n\right),
	\end{align*}
	and
	\begin{equation}
		\alpha + \varepsilon_1 = \frac{n + 2}{n - 1}.
	\end{equation}
	Since $\varepsilon_1 > 0$, so we have when $n > 2$,
	\begin{equation}
		\frac{1}{n - 2}\left(\frac{-2}{\sqrt n - 1} + n \right) < \alpha < \min \left\{\frac{1}{n - 2}\left(\frac{2}{\sqrt n + 1} + n\right), \frac{n + 2}{n - 1} \right\},
	\end{equation}
	and when $n = 2$
	\begin{equation}
		0 < \alpha < \frac{n + 2}{n - 1},
	\end{equation}
	the conditions \eqref{1201equ1}, $\cdots$, \eqref{1201equ2} all hold.

	\end{proof}
	Now using these two lemmas, we can prove the Liouville theorems.
	
	\begin{proof}[Proof of Theorem \ref{theorem2}]
	 Fix $x_0 \in M$ and let $\eta$ be a smooth cut-off function such that
		\begin{equation}
			\begin{aligned}
				\eta &= 1, \text{ in } B_R(x_0),\\
				\eta &= 0, \text{ in } M\backslash B_{2R}(x_0),			
			\end{aligned}
		\end{equation}
		then
		\begin{equation}
			\begin{aligned}
				|\nabla \eta| &\leq CR^{-1},\\
				|\Delta \eta| &\leq \frac{C(1 + \rho^{2 - \epsilon})}{R^2}.
			\end{aligned}
		\end{equation}
		Define 
		\begin{equation}
			W := Z\eta^{\theta}.
		\end{equation}
		Consider in the ball $B_{2R}(x_0)$ and suppose $W$ attains its maximum at $x_1 \in B_{2R}(x_0)$. Then at $x_1$, we have
		\begin{equation*}
			W_i(x_1) = Z_i \eta^{\theta} + \theta Z \eta^{\theta - 1}\eta_i = 0,
		\end{equation*} 
		and
		\begin{align*}
			0 &\geq \frac{1}{2\beta}(U^{t - \gamma + 1}W_i)_i\\
			&= \Big( U^{t - \gamma + 1} Z_i \eta^{\theta} + \theta U^{t - \gamma + 1} Z\eta^{\theta - 1}\eta_i \Big)_i \\
		 &= \Big( U^{t - \gamma + 1} Z_i\Big)_i \eta^{\theta} + 2\theta U^{t - \gamma + 1} Z_i \eta^{\theta - 1}\eta_i  + \theta(t - \gamma + 1)  U^{t - \gamma} Z\eta^{\theta - 1}\eta_i u_i + U^{t - \gamma + 1} Z\Delta( \eta^{\theta}).
		\end{align*}
		By Lemma \ref{lemma2}, we have
		\begin{equation}
			\begin{aligned}
				(U^{t - \gamma + 1}Z_i)_i &\geq -  U^{t - \gamma + 1}Z_i b_i + \varepsilon U^{t + 2\alpha} h^2  + \varepsilon U^{t - 2}|\nabla U|^4 +  \varepsilon U^{t}E_{i j}^2.
			\end{aligned}
		\end{equation}
		So 
		\begin{align*}
			0  &\geq -  U^{t - \gamma + 1}Z_i b_i\eta^{\theta}  + \varepsilon U^{t + 2\alpha} h^2 \eta^{\theta} + \varepsilon U^{t - 2}|\nabla U|^4\eta^{\theta} +  \varepsilon U^{t}E_{i j}^2 \eta^{\theta}\\
			& + 2\theta U^{t - \gamma + 1} Z_i \eta^{\theta - 1}\eta_i + \theta(t - \gamma + 1)  U^{t - \gamma} Z\eta^{\theta - 1}\eta_i u_i + U^{t - \gamma + 1} Z\Delta( \eta^{\theta})\\
		 &\geq  \theta U^{t - \gamma + 1}Z b_i\eta^{\theta - 1}\eta_i  + \varepsilon U^{t + 2\alpha} h^2 \eta^{\theta} + \varepsilon U^{t - 2}|\nabla U|^4\eta^{\theta} +  \varepsilon U^{t}E_{i j}^2 \eta^{\theta}\\
			& - 2\theta^2 U^{t - \gamma + 1} Z \eta^{\theta - 2}|\nabla\eta|^2  + \theta(t - \gamma + 1)  U^{t - \gamma} Z\eta^{\theta - 1}\eta_i u_i + U^{t - \gamma + 1} Z\Delta( \eta^{\theta}),
		\end{align*}
		\begin{equation}\label{man_final1}
			\begin{aligned}
				\Rightarrow & \varepsilon U^{t + 2\alpha} h^2 \eta^{\theta} + \varepsilon U^{t - 2}|\nabla U|^4\eta^{\theta} +  \varepsilon U^{t}E_{i j}^2 \eta^{\theta}\\
				&\leq \underset{\textcircled{1}}{ -\theta U^{t - \gamma + 1}Z b_i\eta^{\theta - 1}\eta_i} + \underset{\textcircled{2}}{ 2\theta^2 U^{t - \gamma + 1} Z \eta^{\theta - 2}|\nabla\eta|^2} \underset{\textcircled{3}}{ - \theta(t - \gamma + 1)  U^{t - \gamma} Z\eta^{\theta - 1}\eta_i u_i} \underset{\textcircled{4}}{ - U^{t - \gamma + 1} Z\Delta( \eta^{\theta})}.
			\end{aligned}
		\end{equation}
		\begin{itemize}
			\item \textcircled{1}:
			\begin{equation}\label{term1}
				\begin{aligned}
					\textcircled{1} &= -\theta U^{t - \gamma + 1}Z b_i\eta^{\theta - 1}\eta_i\\
					&= -\theta U^{t - \gamma + 1} b_i\eta^{\theta - 1}\eta_i(hU^{\gamma + \alpha} + \beta U^{\gamma - 1}|\nabla U|^2)\\
					&\leq \varepsilon^2 U^{t + 2\alpha} h^2 \eta^{\theta} + \varepsilon^2 U^{t - 2}|\nabla U|^4\eta^{\theta}  + \frac{C}{\varepsilon^2 R^2} U^{t + 2}|b|^2\eta^{\theta - 2}.
				\end{aligned}
			\end{equation}
			\item \textcircled{2}:
			\begin{equation}\label{term2}
				\begin{aligned}
					\textcircled{2} &= 2\theta^2 U^{t - \gamma + 1} Z \eta^{\theta - 2}|\nabla\eta|^2\\
					&= 2\theta^2 U^{t - \gamma + 1}  \eta^{\theta - 2}|\nabla\eta|^2(hU^{\gamma + \alpha} + \beta U^{\gamma - 1}|\nabla U|^2)\\
					&\leq \varepsilon^2 U^{t + 2\alpha} h^2 \eta^{\theta} + \varepsilon^2 U^{t - 2}|\nabla U|^4\eta^{\theta}  + \frac{C}{\varepsilon^2 R^4} U^{t + 2}\eta^{\theta - 4}.
				\end{aligned}
			\end{equation}
			\item \textcircled{3}:
			\begin{equation}\label{term3}
				\begin{aligned}
					\textcircled{3} &=  - \theta(t - \gamma + 1)  U^{t - \gamma} Z\eta^{\theta - 1}\eta_i u_i\\
					&=  - \theta(t - \gamma + 1)  U^{t - \gamma} \eta^{\theta - 1}\eta_i u_i(hU^{\gamma + \alpha} + \beta U^{\gamma - 1}|\nabla U|^2)\\
					&\leq \varepsilon^2 U^{t + 2\alpha} h^2 \eta^{\theta} + \varepsilon^2 U^{t - 2}|\nabla U|^4\eta^{\theta}  + \frac{C}{\varepsilon^2 R^4} U^{t + 2}\eta^{\theta - 4}.
				\end{aligned}
			\end{equation}
			\item \textcircled{4}:
			\begin{equation}\label{term4}
				\begin{aligned}
					\textcircled{4} &= - U^{t - \gamma + 1} Z\Delta( \eta^{\theta})\\
					&= - U^{t - \gamma + 1} Z \Big[ \theta(\theta - 1)\eta^{\theta - 2}|\nabla \eta|^2 + \theta \eta^{\theta - 1}\Delta \eta\Big]\\
					&\leq \varepsilon^2 U^{t + 2\alpha} h^2 \eta^{\theta} + \varepsilon^2 U^{t - 2}|\nabla U|^4\eta^{\theta}  + \frac{C}{\varepsilon^2 R^4} U^{t + 2}\eta^{\theta - 4} + \frac{C}{\varepsilon^2 } U^{t + 2}\eta^{\theta - 4}|\Delta \eta|^2 .
				\end{aligned}
			\end{equation}
			
		\end{itemize}
		Therefore substituting \eqref{term1}, \eqref{term2}, \eqref{term3} and \eqref{term4} into \eqref{man_final1}, we get that at $x_1$
		\begin{equation}\label{Young1}
			\begin{aligned}
				& U^{t + 2\alpha} h^2 \eta^{\theta} +  U^{t - 2}|\nabla U|^4\eta^{\theta} \leq \frac{C}{ R^2} U^{t + 2}|b|^2\eta^{\theta - 2} + \frac{C}{ R^4} U^{t + 2}\eta^{\theta - 4} + \frac{C}{\varepsilon^2 }U^{t + 2}\eta^{\theta - 4}|\Delta \eta|^2,\\
				\Rightarrow & U^{2\gamma + 2\alpha} h^2 \eta^{2\theta} +  W^2 \leq \frac{C}{ R^2} U^{2\gamma + 2}|b|^2\eta^{2\theta - 2}  + CU^{2\gamma + 2}\eta^{2\theta - 4}|\Delta \eta|^2 ,\\
			\end{aligned}
		\end{equation}
		Since $\Delta \eta \leq \frac{C(1 + \rho^{2 - \epsilon})}{R^2}$, we obtain
		\begin{equation}\label{Young2}
			\begin{aligned}
				\Rightarrow & U^{2\gamma + 2\alpha} h^2 \eta^{2\theta} +  W^2 \leq \frac{C}{ R^2} U^{2\gamma + 2}|b|^2\eta^{2\theta - 2}  + \frac{C}{ R^{2\epsilon}}U^{2\gamma + 2}\eta^{2\theta - 4}.\\
			\end{aligned}
		\end{equation}

		Depending on the choice of $\alpha$, there are two different cases.  If   $0 < \alpha < \min \left\{\frac{1}{n - 2}\left(\frac{2}{\sqrt n + 1} + n\right), \frac{n + 2}{n - 1} \right\}$,
		we use the result of Lemma \ref{lemma2}.
		Since $\gamma = -1$, we finally get that
		\begin{equation}
			\max_{B_{R}(x_0)}Z \leq 2^{\theta}\max_{B_{R}(x_0)}W \leq 2^{\theta}\max_{B_{2R}(x_0)}W \leq 2^{\theta}W(x_1) \leq \frac{C}{R}|b|\eta^{\theta - 1} + \frac{C}{R^\epsilon}\eta^{\theta - 2}.
		\end{equation}
		Choose $\theta = 2$. Because $|b| = o(\rho)$, we get that when $R$ tends to $\infty$,
		\begin{equation}
			\max_{B_{R}(x_0)}Z \leq 0.
		\end{equation}
		which is impossible if $U > 0$.
	
		Next we consider the case that 
		\begin{equation}
			\min \left\{\frac{1}{n - 2}\left(\frac{2}{\sqrt n + 1} + n\right), \frac{n + 2}{n - 1} \right\} \leq \alpha < \frac{n + 2}{n - 2}.
		\end{equation}
		Firstly, we need to show that at this time, we can always choose $\varepsilon_1, \varepsilon_2 > 0$, such that $\gamma < -1$. We still let $\varepsilon_2 > 0$ small enough.
		
		\begin{itemize}
			\item If $\frac{1}{n - 2}\left(\frac{2}{\sqrt n + 1} + n\right) \geq \frac{n + 2}{n - 1}$, then 
			\begin{equation}
				\alpha + \varepsilon_1 > \frac{n + 2}{n - 1} \Rightarrow \gamma < -1.
			\end{equation}
			
			\item If $\frac{1}{n - 2}\left(\frac{2}{\sqrt n + 1} + n\right) < \frac{n + 2}{n - 1}$, then $n \geq 8$. Let $\varepsilon_1 = \frac{(n - 2)^2 - 4n}{(n - 2)^2(n - 1)}$. By \eqref{range} we get that when
			\begin{equation}
				\begin{aligned}
					\frac{n + 2}{n - 1} \leq \alpha + \varepsilon_1 \leq \frac{n + 2}{n - 1} \cdot \frac{(n - 2)^3 + 4n}{(n - 2)^3},
				\end{aligned}
			\end{equation}
			$F > 0$.
			By direct computation, we get that when $n \geq 8$,
			\begin{equation}
				\frac{n + 2}{n - 1} < \frac{1}{n - 2}\left(\frac{2}{\sqrt n + 1} + n\right) + \frac{(n - 2)^2 - 4n}{(n - 2)^2(n - 1)}.
			\end{equation}
			So for $\alpha > \frac{1}{n - 2}\left(\frac{2}{\sqrt n + 1} + n\right)$, we let $\varepsilon_1$ from $\frac{(n - 2)^2 - 4n}{(n - 2)^2(n - 1)}$ tend to $0$. As a result, 
			\begin{equation}
				\alpha + \varepsilon_1 > \frac{n + 2}{n - 1}, \Rightarrow \gamma < -1.
			\end{equation}
		\end{itemize}
		
	Consider in the ball $B_{2R}(x_0)$ and suppose $W$ attains its maximum at $x_1 \in B_{2R}(x_0)$. 
		If $x_1 \in B_R(x_0)$, then $Z(x_1) = W(x_1)$. So by maximum principle, we know 
		\begin{equation}
			\max_{B_{R}}Z \leq \max_{B_{2R}}W \leq W(x_1) = Z(x_1) \leq 0.
		\end{equation}
		This is impossible since $U(x_1) > 0$ and $Z(x_1) > 0$. 
		
		If $x_1 \in B_{2R}(x_0)\backslash B_R(x_0)$.
		 By \eqref{Young1}, we get that
		\begin{equation}
			\begin{aligned}
				& U^{2\gamma + 2\alpha} h^2 \eta^{2\theta} +  W^2 \leq \frac{C}{ R^2} U^{2\gamma + 2}|b|^2\eta^{2\theta - 2}  + CU^{2\gamma + 2}\eta^{2\theta - 4}|\Delta \eta|^2.
			\end{aligned}
		\end{equation}
		Since 
		\begin{equation}
			|\Delta \eta| \leq \frac{C(1 +  k\rho )}{R^2},
		\end{equation}
		we have
		\begin{equation}
			\begin{aligned}
				& W^2(x_1) \leq \frac{C}{ R^{4 }} U^{2\gamma + 2}\eta^{2\theta - 2} ,\\
				\Rightarrow & W(x_1) \leq \frac{C}{ R^2} U^{\gamma + 1}\eta^{\theta - 1}.
			\end{aligned}
		\end{equation}
		Inspired by the work of Serrin-Zou \cite{Serrin1} and Catino-Monticelli \cite{JEMS} , we need to give a lower bound of $U$. Following the proof of Lemma 2.8 in \cite{JEMS}, we define 
		\begin{equation}
			v:=  \rho^{2 - n - \delta},
		\end{equation}
		where $\delta > 0$ will be determined later.  Then
		\begin{equation}
			\begin{aligned}
				\Delta v + b_i v_i &= (n - 1 + \delta)(n - 2 + \delta) \rho^{-n - \delta} - (n - 2 + \delta) \rho^{1 - n - \delta}\Delta \rho \\
				&- (n - 2 + \delta)\rho^{1 - n - \delta} b_i \rho_i.
			\end{aligned}
		\end{equation}
		Applying the Laplacian comparison (Theorem 1.2 in \cite{Al}), we get
		\begin{equation}
			\rho \Delta \rho \leq (n - 1)(1 + c_n k \rho ).
		\end{equation}
		Thus
		\begin{equation}
			\begin{aligned}
				\Delta v + b_i v_i &\geq (n - 1 + \delta)(n - 2 + \delta) \rho^{-n - \delta} - (n - 2 + \delta) \rho^{ - n - \delta}(n - 1)(1 + c_n k \rho) \\
				&- (n - 2 + \delta)\rho^{1 - n - \delta} |b|\\
				&=  \delta(n - 2 + \delta) \rho^{-n - \delta} - (n - 2 + \delta) \rho^{ - n - \delta}(n - 1) c_n k \rho \\
				&- (n - 2 + \delta) \rho^{1 - n - \delta} |b|\\
				&= \Big[ \delta(n - 2 + \delta)  - (n - 2 + \delta) (n - 1) c_n k\rho - (n - 2 + \delta) |b|\rho \Big]\rho^{ - n - \delta }.
			\end{aligned}
		\end{equation}
		So there exists $\rho_0 = \rho_0(\delta, n, k, b)$, such that 
		when $\rho \geq \rho_0$ we have
		\begin{equation}\label{vvv}
			\Delta v + b_i v_i > 0.
		\end{equation}
		We remark that \eqref{vvv} holds pointwise in the complement of the cut locus of $x_0$ and weakly on M. So we have
		\begin{equation}
			\Delta v + b_i v_i > 0, \text{ weakly on }M\backslash B_{\rho_0}.
		\end{equation}
		 Define 
		\begin{equation}
			V := v\cdot \min_{B_{\rho_0}}U,
		\end{equation}
		then
		\begin{equation}
			U > V, \text{ in } M \backslash B_{\rho_0}.
		\end{equation}
		Now let $R > \rho_0$, then
		\begin{equation}
			\begin{aligned}
				 W(x_1) &\leq \frac{C}{ R^2} U^{\gamma + 1}\eta^{\theta - 1} \\
				  &\leq \frac{C}{ R^{2}} \eta^{\theta - 1}\rho^{(2 - n - \delta)(\gamma + 1)}\\
				  &\leq CR^{-(n - 2)\gamma - n - \delta(\gamma + 1)}.
			\end{aligned}
		\end{equation}
		Since $-\frac{n}{n - 2} < \gamma < -1 $, if we choose 
		\begin{equation}
			\delta = -\frac{(n - 2)\gamma + n}{2(\gamma + 1)},
		\end{equation}
		then
		\begin{equation}
			W(x_1)\leq CR^{-\frac{1}{2}[(n - 2)\gamma + n] }.
		\end{equation}
		Let $R$ tends to $\infty$, we get
		\begin{equation}
			\max_{B_{R}}Z \leq 0.
		\end{equation}
		This is impossible.
		
	\end{proof}

\end{CJK}	
\end{document}